\theoremstyle{plain}
\newtheorem{lemma}[equation]{Lemma}
\crefname{prop}{Proposition}{Propositions}
\newtheorem{theorem}[equation]{Theorem}
\crefname{obs}{Observation}{Observations}
\crefname{cor}{Corollary}{Corollaries}
\def\@empty{}
\def\ifemptycredit#1{%
	\def\tmp{#1}%
	\ifx\tmp\@empty%
	\else%
	{~(#1)}%
	\fi%
}
\newenvironment{namedthm*}[2][]{
\par\noindent \textbf{#2}\ifemptycredit{#1}\textbf{.}\itshape\xspace
}{}
\theoremstyle{definition}
\crefname{defn}{Definition}{Definitions}
\newtheorem{defn}[equation]{Definition}
\crefname{example}{Example}{Examples}
\theoremstyle{remark}
\newtheorem*{acknowledge}{Acknowledgements}
\crefname{remark}{Remark}{Remarks}
\crefname{claim}{Claim}{Claims}
\newtheorem*{claim*}{Claim}
\crefname{assumption}{Assumption}{Assumptions}
\declaretheoremstyle[
spaceabove=\topsep, 
spacebelow=6pt,
headfont=\normalfont\itshape,
notefont=\normalfont, notebraces={(}{)},
bodyfont=\normalfont,
postheadspace=4pt,
qed=\mbox{\smaller[4]$\boxtimes$}
]{claimproofstyle}
\declaretheorem[name={Proof of Claim}, style=claimproofstyle, unnumbered]{pf}
\let\@\mathcal
\newcommand{\N}{\mathbb{N}}
\newcommand{\R}{\mathbb{R}}
\newcommand{\Z}{\mathbb{Z}}
\newcommand{\e}{\varepsilon}
\renewcommand{\phi}{\varphi}
\newcommand*{\defeq}{\mathrel{\vcenter{\baselineskip0.5ex \lineskiplimit0pt \hbox{\scriptsize.}\hbox{\scriptsize.}}}=}
\newcommand{\set}[1]{\left\{ #1 \right\}}
\newcommand{\rest}[1]{\mathord{\downharpoonright_{#1}}}
\newcommand{\fsup}[1][f]{{#1}^*}
\newcommand{\finf}[1][f]{{#1}_*}
\title{A tiling property for actions of amenable groups along Tempelman F{\o}lner sequences}
\author{Jonathan Boretsky}
\author{Jenna Zomback}
\thanks{The first author conducted research through the summer research USRA program at McGill University
under the supervision of Marcin Sabok supported by NSERC Discovery Grant RGPIN 2015-03738.}
\date{\today}
\begin{document}

\vspace{-0.5cm}

\begin{abstract}
    We show that a certain tiling property (which directly implies the pointwise ergodic theorem)
    holds for pmp actions of amenable groups along increasing Tempelman F{\o}lner sequences, thus providing
	a short and combinatorial proof of the corresponding pointwise ergodic theorem. 
\end{abstract}

\vspace{-1cm}
\maketitle

\section{Introduction}
For a group $\Gamma$ acting on a probability space $X$ and a sequence $(F_n)$ of finite subsets of $\Gamma$, the pointwise ergodic property for $\Gamma$ along $(F_n)$ says that the action of $\Gamma$ is ergodic if and only if for every $L^1$ function $f$ on $X$, the integral (the global average) of $f$ over $X$ is equal to the limit of the averages of $f$ over $F_n\cdot x$ (the pointwise average) for almost every $x\in X$. The classical ergodic theorem, due to G. D. Birkhoff in 1931 \cite{Bir}, says that probability measure preserving (pmp) actions of $\Z$ along the sequence $[0,n)$ have the pointwise ergodic property. In 2001, E. Lindenstrauss proved that actions of amenable groups along tempered F\o lner sequences have the pointwise ergodic property \cite{Lin}.

A. Tserunyan in \cite{Tse} gives a short, combinatorial proof of the classical pointwise ergodic theorem (for $\Z$) by reducing it to showing that the following tiling property holds for pmp actions of $\Gamma=\Z$ along the intervals $F_n=[0,n)$:

\begin{defn}[Tiling Property]\label{Tiling}
We say that a pmp action of a countable group $\Gamma$ on a standard probability space $(X,\mu)$ has the \textit{tiling property} along a sequence $(F_n)$ of finite subsets of $\Gamma$ if for any pointwise increasing sequence of measurable functions $\ell_n : X \to \N$, $n \in \N$, and $\epsilon > 0$, there are arbitrarily large finite subsets $T \subseteq \Gamma$ such that for a set of points $x$ of measure at least $(1-\epsilon)$, $T\cdot x$ can be covered up to $\epsilon$ fraction by disjoint sets of the form $F_{\ell_i(y)}\cdot y$ (where $T\cdot x$ and $F_{\ell_i(y)}\cdot y$ are treated as multisets if the action of $\Gamma$ is not free). 
\end{defn}

It is also implicit in \cite{Tse} that the tiling property implies the pointwise ergodic theorem for any pmp group action (see \cref{sec_implication} for a proof). This implication distills out the analytic part from the proofs of pointwise ergodic theorems, reducing them to combinatorial (finitary) tiling problems.
Another proof of the ergodic theorem for $\Z$ revolving around the same idea was given in \cite{Keane-Petersen:ergodic_thm}. 

In this paper, 
we prove that the tiling property holds for pmp actions of amenable groups along increasing Tempelman F{\o}lner sequences $(F_n)$ by finding Vitali covers with F{\o}lner tiles on multiple scales. As a consequence, we prove the corresponding pointwise ergodic theorem:

\begin{theorem}[Pointwise ergodic]\label{ptwise_ergodic}
Fix a pmp action of an amenable group $\Gamma$ on a standard probability space $(X,\mu)$ and an increasing Tempelman F\o lner sequence $(F_n)$. Then the action of $\Gamma$ on $X$ is ergodic if and only if for every $f\in L^1(X,\mu)$, 
$$\lim_n A_f[F_n\cdot x]=\int_X f(x)d\mu(x)\text{ a.e.}$$
where $A_f[F_n\cdot x]\defeq \frac{1}{|F_n|}\sum_{\gamma\in F_n}f(\gamma\cdot x)$.
\end{theorem}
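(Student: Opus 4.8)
The plan is to obtain \cref{ptwise_ergodic} by combining two statements: the \emph{tiling property} of \cref{Tiling} for the given action along $(F_n)$ — the main combinatorial content of the paper, proved by the multiscale Vitali construction mentioned in the introduction — and the general, action-independent fact that the tiling property implies the pointwise ergodic theorem for any pmp action. Granting both, the theorem follows at once, so I outline how I would prove each.

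\emph{The implication ``tiling property $\Rightarrow$ pointwise convergence''.} The easy direction is immediate: if $A_f[F_n\cdot x]\to\int f\,d\mu$ a.e.\ for every $f\in L^1$, then taking $f=\1_A$ for a $\Gamma$-invariant measurable $A$ gives $\1_A(x)=\mu(A)$ a.e., whence $\mu(A)\in\{0,1\}$. For the substantive direction, assume ergodicity; it suffices to prove $\limsup_n A_f[F_n\cdot x]\le\int f\,d\mu$ a.e.\ for every $f\in L^1$, since applying this to $-f$ gives the matching $\liminf$ bound, and the two together already force a.e.\ convergence to $\int f\,d\mu$ (no mean ergodic theorem is needed). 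I would first do this for bounded $f$. If it fails, then $f^*\defeq\limsup_n A_f[F_n\cdot x]$ is $\Gamma$-invariant (here boundedness of $f$ and the \Folner\ property of $(F_n)$ are used), so by ergodicity there is $\alpha>0$ with $f^*>\int f\,d\mu+\alpha$ a.e. Fix small $\e>0$, let $\ell_n(x)$ be the least $m\ge n$ with $A_f[F_m\cdot x]>\int f\,d\mu+\alpha$ (a pointwise-increasing measurable family, defined a.e.\ and extended arbitrarily on the null complement), and feed $(\ell_n)$ and $\e$ into \cref{Tiling} to get an arbitrarily large finite $T\subseteq\Gamma$ and a set $G$ with $\mu(G)\ge 1-\e$ such that for $x\in G$ the orbit piece $T\cdot x$ is covered up to an $\e$-fraction by disjoint tiles $F_{\ell_i(y)}\cdot y$, each satisfying $\sum_{\gamma\in F_{\ell_i(y)}}f(\gamma\cdot y)>\big(\int f\,d\mu+\alpha\big)\,|F_{\ell_i(y)}|$. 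Summing over the tiles and bounding the contribution of the uncovered $\e$-fraction by $\e|T|\cdot\|f\|_\infty$ yields $A_f[T\cdot x]>\int f\,d\mu+\alpha-O(\e\|f\|_\infty)$ on $G$. Integrating in $x$, the left side integrates over $X$ to $\frac{1}{|T|}\sum_{\gamma\in T}\int f(\gamma\cdot x)\,d\mu=\int f\,d\mu$ by a finite change of variables, while the integral over $G$ of the right side is at least $(1-\e)\big(\int f\,d\mu+\alpha-O(\e\|f\|_\infty)\big)$; comparing and letting $\e\to 0$ gives $\int f\,d\mu\ge\int f\,d\mu+\alpha$, a contradiction. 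To pass from bounded $f$ to all $f\in L^1$, I would establish a weak-type maximal inequality for the averages $A_{|f|}[F_n\cdot x]$ and invoke the Banach principle: a.e.\ convergence on the dense subclass of bounded functions together with the maximal inequality gives a.e.\ convergence on $L^1$.

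\emph{The tiling property along increasing Tempelman \Folner\ sequences.} This is the real work, and where I expect essentially all of the difficulty to lie. Given a pointwise-increasing sequence $\ell_n\colon X\to\N$ and $\e>0$, one must produce arbitrarily large finite $T\subseteq\Gamma$ so that, for most $x$, $T\cdot x$ is nearly covered by pairwise-disjoint (as multisets) tiles $F_{\ell_i(y)}\cdot y$. I would build $T$ together with its covering by a greedy Vitali selection over multiple scales: fix a large top scale, and repeatedly pick a tile $F_{\ell_i(y)}\cdot y$ of the currently largest admissible scale that is disjoint from those already chosen, then move down the scales. The Tempelman condition $\sup_n|F_n^{-1}F_n|/|F_n|<\infty$ is precisely what controls the overlaps of the associated comparison sets $F_nF_n^{-1}$, guaranteeing that the greedily chosen disjoint tiles at each stage already meet a definite fraction of what they could, so that the procedure, run over enough scales, leaves only an $\e$-fraction uncovered; a transfer/correspondence argument then moves this finitary combinatorial covering onto the dynamical system to produce the measurable set of good $x$. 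Making the multiscale bookkeeping and the bounded-overlap estimates precise — uniformly over the unknown functions $\ell_n$ — is the main obstacle.
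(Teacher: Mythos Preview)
Your overall architecture matches the paper: reduce \cref{ptwise_ergodic} to (i) the implication ``tiling property $\Rightarrow$ pointwise ergodic'' and (ii) the tiling property along increasing Tempelman \Folner\ sequences, with (ii) established by a multiscale Vitali argument whose per-scale gain is governed by the Tempelman constant. Your sketch of (ii) is correct in spirit, though the paper is more concrete: it fixes $r$ with $((C-1)/C)^r<\e/2$, selects $r$ nested size-windows $[L_i,R_i]$ using the \Folner\ condition, and applies the Vitali covering lemma inside $T\cdot x$ at each scale to the still-uncovered points, carefully bounding the boundary losses between scales; there is no separate ``transfer/correspondence'' step, since the covering is built directly in each orbit piece. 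The substantive divergence is in (i): you first treat bounded $f$ and then pass to $L^1$ via a weak-type maximal inequality and the Banach principle, whereas the paper handles an arbitrary $f\in L^1$ in one shot by truncating---choosing $M$ so that $Y\defeq f^{-1}(-M,\infty)$ has measure $\ge 1-\delta$, and working with $\1_Y(f-c)$ on the tiled and untiled parts of $T\cdot x$. Your route is correct (the Tempelman Vitali lemma does yield the requisite maximal inequality), but it imports precisely the analytic machinery the paper is designed to bypass; the paper's truncation trick keeps the argument entirely finitary and self-contained. A minor point: your $\ell_n(x)=\min\{m\ge n:A_f[F_m\cdot x]>\int f+\alpha\}$ is only nondecreasing, while the tiling property as stated (and used in the paper's proof of (ii)) wants a strictly increasing sequence; the paper instead takes $\ell_i(x)$ to be the $i^{\text{th}}$ index $n$ with $A_f[F_n\cdot x]>c$, which fixes this.
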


Although this is less general than Lindenstrauss's theorem, our proof is shorter and offers the advantage that the methods used are more elementary and finitary. 

Many people have shown this result for increasing Tempelman F{\o}lner sequences. The shortest proof of \cref{ptwise_ergodic} that the authors are aware of is given in \cite{OW}, which uses a Vitali covering lemma along with basic functional analysis: a function $f\in L^1(X,\mu)$ is approximated by functions for which the ergodic theorem holds trivially, and the error is controlled by applying the Vitali covering lemma.  Other proofs include \cite{Eme} and \cite{Tem}, which also use a Vitali covering lemma along with analysis. 
However, none of these proofs yield the tiling property described above, and hence they do not take advantage of the abstract implication of the corresponding pointwise ergodic theorem. 

\subsubsection*{A word on the proof of the tiling property} Compared to \cite{Tse}, the tiling property is much harder to establish for general increasing Tempelman F\o lner sequences. For example, tiling $\Z^d$ with boxes of different given sizes for each center is harder than tiling $\Z$ with intervals. 
The key idea in mitigating this difficulty is to iterate the Vitali covering lemma to find covers on multiple scales. We essentially zoom very far out, cover some constant fraction of the space with large sets (this fraction comes from the Tempelman condition and is independent of how far we've zoomed out), and then zoom in on the spots we miss, and fill those in as best we can with smaller sets, and so on and so forth. Since we cover a constant fraction on each scale, if we zoom out far enough at the beginning, once we zoom all the way back in, we will have covered nearly the whole space. 

\subsubsection*{Organization} In \cref{sec_def}, we provide the necessary definitions and notation that will be used throughout the paper. In \cref{sec_implication}, we give an explicit proof, due to Tserunyan, that \cref{Tiling} implies the pointwise ergodic property for any pmp group action. In \cref{differentsizes}, we establish the tiling property for pmp actions of amenable groups along increasing Tempelman F\o lner sequences, which then directly implies the corresponding pointwise ergodic property.

\begin{acknowledge}
The authors would like to thank their advisors, Marcin Sabok and Anush Tserunyan, for their guidance, suggestions, and support. Many thanks as well to Benjamin Weiss for pointing out the proof of Theorem 2 given in \cite{OW}, and to Alexander Kechris for useful suggestions.
\end{acknowledge}

\section{Definitions and notation}\label{sec_def}
Let $(X,\mu)$ be a standard probability space, and a function $f:X\rightarrow\R$. For a finite set $A\subseteq X$, define the \textit{average} of $f$ over $A$, $A_f[A]\defeq\frac{1}{|A|}\sum_{x\in A}f(x)$. For a finite equivalence relation $F$ on $X$, define $A_f[F](x)\defeq A_f[[x]_F]$.
Given a group $\Gamma$ and a finite set $R$, define the $R$-\textit{boundary} of a set $S$, denoted $\partial_R S$, to be the set of points $s$ for which $Rs\cap S\neq \emptyset$ and $R_s\cap S^c\neq \emptyset$. 

A sequence $(F_n)_{n\in\N}$ of finite subsets of $\Gamma$ is a \textit{F\o lner} sequence if $\Gamma=\bigcup_n F_n$ and $\lim_n\frac{|\partial_R F_n|}{|F_n|}=0$ for all finite sets $R$. A group $\Gamma$ is called \textit{amenable} if it admits a F\o lner sequence. For this paper, we will assume that our F\o lner sequences are increasing. 

Given a F\o lner sequence $(F_n)$, we say $(F_n)$ is \textit{tempered} if there is some natural number $C$ such that for all $n$,
$$\left|\bigcup_{k<n}F_k^{-1}F_n\right|\leq C|F_n|$$
and \textit{Tempelman} if there is $C$ such that for all $n$,
$$\left|\bigcup_{k\leq n}F_k^{-1}F_n\right|\leq C|F_n|$$
in the latter case, we'll call the smallest such $C$ the \textit{Tempelman constant} of $(F_n)$. Note that any Tempelman F\o lner sequence is tempered.

Every amenable group has a tempered F{\o}lner sequence (in fact, every F{\o}lner sequence has a tempered subsequence). In \cite{Lin}*{Example 4.2}, an example is given of an amenable group without a Tempelman F{\o}lner sequence. However, \cite{Hoc}*{Theorem 3.4} gives a sufficient condition for the existence of a Tempelman F{\o}lner sequence:

\begin{theorem}[Hochman 2007]
If for a countable, abelian, amenable group $G$, we have
$$r(G)=\sup\{n\in\mathbb{N}: G\text{ contains a subgroup isomorphic to  } \Z^n\}<\infty$$
then $G$ possesses at least one Tempelman F{\o}lner sequence.
\end{theorem}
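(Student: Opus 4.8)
This is \cite{Hoc}*{Theorem 3.4}; here is how one would prove it. The first point is that $r(G)$ is simply the torsion-free rank of $G$: since $\Z^n$ is free abelian of rank $n$, it embeds in an abelian group $G$ exactly when $G$ contains $n$ $\Z$-linearly independent elements, so $r(G)<\infty$ says this rank equals some $r$ that is attained. I would fix a maximal independent set $a_1,\dots,a_r$ and an enumeration $G=\set{g_1,g_2,\dots}$, and put $H_k\defeq\gen{g_1,\dots,g_k,a_1,\dots,a_r}$. Each $H_k$ is then a finitely generated abelian group of torsion-free rank exactly $r$, so the structure theorem gives an isomorphism $\phi_k\colon H_k\to\Z^r\oplus T_k$ with $T_k$ finite; moreover the $H_k$ increase to $G$. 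The plan is to take for $F_k$ a coordinate box inside $H_k$, namely $F_k\defeq\phi_k^{-1}\bigl((-m_k,m_k)^r\times T_k\bigr)$, for scales $m_k\in\N$ that grow fast enough.

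The one genuine difficulty is that a subgroup $H_j$ can sit inside $H_k$ with large geometric distortion --- think of $\Z\subseteq\tfrac1{k!}\Z\subseteq\Q$ --- so that a bounded box in $H_j$ looks like a hugely stretched box in the coordinates of $\Z^r\oplus T_k$. I would handle this by observing that the homomorphism $\Z^r\oplus T_j\to\Z^r\oplus T_k$ assembled from $\phi_j^{-1}$, the inclusion $H_j\hookrightarrow H_k$, and $\phi_k$, followed by the projection onto $\Z^r$, annihilates the finite summand $T_j$ and hence restricts to a linear map $\Z^r\to\Z^r$; in particular each finite set $F_j$ occupies only a bounded region of $\Z^r$, say of radius $R_{j,k}$, inside $H_k$. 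I would then choose $m_1\le m_2\le\cdots$ recursively so that, for every $k$, $m_k$ is at least $k$ times each of the finitely many numbers $R_{1,k},\dots,R_{k-1,k}$ and at least $k$ times the $\Z^r$-radius of each of $g_1,\dots,g_k$ inside $H_k$; in particular $m_k\to\infty$. The point of this choice is that for every $j\le k$ the box $F_j$ then projects into the $\Z^r$-ball of radius $m_k$ inside $H_k$ --- the \emph{same} $r$-dimensional ball for all of these $j$.

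It then remains to verify three properties, each of which should reduce to a short box estimate. First, $\bigcup_k F_k=G$, because each $g_j$ lies in $F_k$ for all $k>j$ by the choice of $m_k$. Second, $(F_k)$ is F\o lner: for a fixed finite $R\subseteq G$ and all large $k$ one has $R\subseteq H_k$ and $R$ has $\Z^r$-radius at most $m_k/k$ inside $H_k$, so $\partial_R F_k$ lies in a ``rind'' of $F_k$ of that width, of relative size $O(r/k)\to 0$. Third, $(F_k)$ is Tempelman: for $j\le k$, in the coordinates of $H_k$ the set $F_j^{-1}$ has $\Z^r$-radius at most $R_{j,k}\le m_k$, so $F_j^{-1}F_k\subseteq\phi_k^{-1}\bigl((-2m_k,2m_k)^r\times T_k\bigr)$; since this single set contains all of $\bigcup_{j\le k}F_j^{-1}F_k$ and has at most $(4m_k)^r|T_k|\le 4^r|F_k|$ elements, the Tempelman constant is at most $4^r$. (The same bookkeeping shows $F_k\subseteq F_{k+1}$, so the sequence is in fact increasing, which is what \cref{ptwise_ergodic} needs.)

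I expect the main obstacle to be exactly the balancing in the second step: $m_k$ must be taken large enough to absorb the distortions $R_{j,k}$ of all the earlier subgroups, yet the Tempelman constant must remain bounded uniformly in $k$. This is possible precisely because $r<\infty$ fixes the ambient dimension once and for all; for $G$ of infinite rank, such as $\bigoplus_{n\in\N}\Z$, any F\o lner set is forced to be thick in unboundedly many directions and no Tempelman F\o lner sequence exists, so the finite-rank hypothesis cannot be dropped.
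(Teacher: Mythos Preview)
The paper does not prove this theorem at all: it is quoted without proof as \cite{Hoc}*{Theorem~3.4}, purely to give context for when Tempelman F{\o}lner sequences exist. So there is nothing in the paper to compare your sketch against.

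For what it is worth, your outline is a faithful reconstruction of how Hochman's argument goes: exhaust $G$ by finitely generated subgroups $H_k$ of fixed torsion-free rank $r$, use the structure theorem to write $H_k\cong\Z^r\oplus T_k$, take coordinate boxes as F{\o}lner sets, and grow the box scales $m_k$ fast enough to swallow the distortion of earlier boxes under later coordinate systems, so that $\bigcup_{j\le k}F_j^{-1}F_k$ sits inside a single box of side $O(m_k)$ and the Tempelman constant is bounded by a power of $2$ depending only on $r$. As a sketch this is fine; if you were to turn it into a full proof you would want to be a little more careful about the $T_k$-component when checking $F_k\subseteq F_{k+1}$ (torsion maps to torsion, so the image of $F_k$ in $H_{k+1}$ has unrestricted $T_{k+1}$-component only after you note this), but that is a detail, not a gap.
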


\section{The tiling property implies the pointwise ergodic theorem}\label{sec_implication}

The following result is implicitly stated in \cite{Tse} and was explained by Tserunyan to the second author.
\begin{theorem}[Tserunyan]\label{lemma_tserunyan}
Assume $\Gamma$ has the tiling property along a sequence of finite subsets $(F_n)$. Then for any pmp action of $\Gamma$ on a standard probability space $(X,\mu)$, the action of $\Gamma$ on $X$ is ergodic if and only if for every $f\in L^1(X,\mu)$, 
$$\lim_n A_f[F_n\cdot x]=\int_X f(x)d\mu(x)\text{ a.e.}$$
where $A_f[F_n\cdot x]\defeq \frac{1}{|F_n|}\sum_{\gamma\in F_n}f(\gamma\cdot x)$.
\end{theorem}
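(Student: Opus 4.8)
The plan is to prove both directions of the equivalence, with the forward direction (ergodicity implies pointwise convergence to the integral) being where the tiling property does the real work; the reverse direction is a soft, standard argument.

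\textbf{The easy direction.} Suppose the averages $A_f[F_n\cdot x]$ converge a.e.\ to $\int_X f\,d\mu$ for every $f\in L^1(X,\mu)$. To show ergodicity, take any $\Gamma$-invariant measurable set $A\subseteq X$ and apply the hypothesis to $f=\1_A$. For a.e.\ $x$, invariance of $A$ forces $\1_A(\gamma\cdot x)=\1_A(x)$ for all $\gamma\in\Gamma$, so $A_f[F_n\cdot x]=\1_A(x)$ for every $n$; hence the limit is $\1_A(x)$, which must equal $\mu(A)$ a.e. Thus $\mu(A)\in\{0,1\}$, so the action is ergodic.

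\textbf{The hard direction.} Assume the action is ergodic and fix $f\in L^1(X,\mu)$; we want $A_f[F_n\cdot x]\to\int_X f\,d\mu$ a.e. The standard reduction is to control the maximal function: it suffices to prove a weak-type maximal inequality of the form $\mu\{x: \sup_n A_{|g|}[F_n\cdot x] > \lambda\} \le \frac{C}{\lambda}\|g\|_1$ for all $g\in L^1$ and $\lambda>0$, where $C$ is absolute. Indeed, granting such an inequality, one approximates $f$ in $L^1$ by a ``good'' function $h$ for which convergence holds a.e.\ (e.g.\ by the mean ergodic theorem one can arrange $h$ to be a bounded function with $\|f-h\|_1$ small, or one uses that the set of $f$ for which a.e.\ convergence holds is closed in $L^1$ by the maximal inequality applied to $f-h$), splits $f = h + (f-h)$, and the maximal inequality plus a Borel--Cantelli / $\limsup$ argument kills the contribution of $f-h$; the constant limit value is then pinned down to $\int_X f\,d\mu$ by ergodicity together with the mean ($L^1$) ergodic theorem, which identifies any $L^1$-limit of the averages as the conditional expectation onto the invariant $\sigma$-algebra, hence the constant $\int f$.

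\textbf{Where the tiling property enters.} The maximal inequality is exactly what the tiling property buys us, transferred from the combinatorial setting to the measure space. Given $\lambda>0$ and $g\in L^1$, for each $x$ in the maximal set pick $\ell(x)\in\N$ witnessing $A_{|g|}[F_{\ell(x)}\cdot x]>\lambda$; after truncating we may take $\ell$ measurable and, passing to $\ell_n(x)=\min(\ell(x),n)$, pointwise increasing in $n$. Apply the tiling property with these $\ell_n$ and a small $\epsilon$: one obtains large finite $T\subseteq\Gamma$ so that for a $(1-\epsilon)$-measure set of $x$, the orbit chunk $T\cdot x$ is covered up to an $\epsilon$-fraction by disjoint ``F\o lner tiles'' $F_{\ell_i(y)}\cdot y$, each of which carries $|g|$-mass exceeding $\lambda|F_{\ell_i(y)}|$. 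Summing over the tiles and using that they are disjoint subsets of $T\cdot x$, one gets $\sum_{\gamma\in T}|g|(\gamma\cdot x) \ge (1-\epsilon)\lambda\,|T|\cdot(\text{measure of the maximal set}) - (\text{error})$; dividing by $|T|$, integrating over $x$, using the pmp property to turn orbit averages into $\int |g|\,d\mu$, and letting $\epsilon\to0$ and $|T|\to\infty$ yields the weak-type bound. The main obstacle is this transfer step: carefully bookkeeping the multiset conventions when the action is not free, controlling the $\epsilon$-error terms uniformly, and making sure the choice of $\ell$ can be made measurable and the $T$-averages genuinely approximate the spatial integral; the Tempelman constant is what guarantees the covering fraction in the tiling property does not degrade, so that the argument closes.
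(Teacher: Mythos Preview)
Your easy direction is fine. The gap is in the attempt to extract a weak-type maximal inequality from the tiling property. The tiling property demands a pointwise \emph{increasing} sequence of measurable functions $\ell_n:X\to\N$ defined on \emph{all} of $X$. For a point $y$ in the maximal set $\{\sup_n A_{|g|}[F_n\cdot y]>\lambda\}$ there may be only a single witnessing $n$, so you cannot build an increasing sequence of witnesses; for $y$ outside the maximal set there is no witness at all. Whatever extension of $\ell_n$ you pick off the maximal set, the tiling of $T\cdot x$ will in general contain tiles $F_{\ell_i(y)}\cdot y$ centered at such $y$, and those tiles carry no guaranteed $|g|$-mass. Hence the assertion ``each of which carries $|g|$-mass exceeding $\lambda|F_{\ell_i(y)}|$'' is unjustified, and the displayed inequality (which also mixes a pointwise sum on the left with $\mu(\text{maximal set})$ on the right) does not follow. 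The tiling property is not a Vitali covering lemma: it tiles \emph{all} of $T\cdot x$ with tiles whose shapes are dictated by the given $\ell_i$, rather than selecting a disjoint subfamily from a collection of good balls covering a prescribed set.

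The paper avoids all of this by \emph{not} proving a maximal inequality. After reducing to $\int f=0$, it uses ergodicity immediately: $f^*:=\limsup_n A_f[F_n\cdot x]$ is invariant, hence a constant; if that constant exceeds some $c>0$ then for a.e.\ $y$ there are \emph{infinitely many} $n$ with $A_f[F_n\cdot y]>c$, so $\ell_i(y):=\text{the }i\text{th such }n$ is legitimately defined and increasing a.e. Now \emph{every} tile in the tiling of $T\cdot x$ has $f$-average exceeding $c$. To handle the unboundedness of $f$ on the uncovered $\epsilon$-fraction, the paper truncates: pick $M$ so that $Y:=f^{-1}(-M,\infty)$ has nearly full measure, and estimate $A_{\1_Y(f-c)}[T\cdot x]\ge -(M+c)\epsilon$ on the well-tiled set. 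Averaging over $T$ and integrating gives $\int_Y(f-c)\,d\mu\ge -c/3$; combined with $\int_{X\setminus Y}(f-c)\,d\mu>-c/3$ (absolute continuity) this yields the contradiction $0=\int f>c-c/3-c/3>0$. No maximal inequality, no $L^1$-approximation, no mean ergodic theorem or Borel--Cantelli is needed.
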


\begin{proof}
By replacing $f$ with $f - \int f$, we may assume without loss of generality that $\int f = 0$. We will show that $\fsup \defeq \limsup_{n \to \infty} A_f[F_n\cdot x] \leq 0$ a.e., and an analogous argument shows $\finf \defeq \liminf_{n \to \infty} A_f[F_n\cdot x] \geq 0$ a.e.

Since $\fsup$ is $T$-invariant, ergodicity implies that it is some constant $C$ almost everywhere. Assume by way of contradiction that $C>c > 0$. Define $\ell_i : X \to \N$ by $x \mapsto$ the $i^{\text{th}}$ $n\in\N$ such that $A_f[F_n x] > c$ (equivalently, $A_{f-c}[F_n x] > 0$).

Fix $\delta > 0$ small enough so that for any measurable $Y \subseteq X$, $\mu(Y)<\delta$ implies $\int_Y (f-c)\, d\mu > -\frac{c}{3}$, and let $M\in\N$ be large enough so that the set $Y \defeq f^{-1}(-M,\infty)$ has measure at least $1 - \delta$.

The tiling property applied to the function $\ell_i$ with $\e \defeq \frac{1}{2(M+c)} \frac{c}{3}$ gives a finite $T\subseteq \Gamma$ such that $\mu(Z) \ge 1 - \e$, where $Z$ is the set of all $x\in X$ such that at least $1-\e$ fraction of $T\cdot x$ is partitioned into sets of the form $\ell_i(y)\cdot y$.

\begin{claim*}
For each $x \in Z$, $A_{\mathbb{1}_Y(f-c)}[T\cdot x] \ge -(M+c) \e$.
\end{claim*}

\begin{pf}
By the definition of $Z$, on a subset $B \subseteq T\cdot x$ that occupies at least $1-\e$ fraction of $T\cdot x$, the average of $f - c$ is positive, and hence that of $\mathbb{1}_Y (f-c) \rest{B}$ is non-negative. On the remaining set $T\cdot x \setminus B$, the function $\mathbb{1}_Y (f-c)$ is at least $-(M+c)$, by the definition of $Y$. Thus, the average of $\mathbb{1}_Y (f-c)$ on the entire $T\cdot x$ is at least $- (M+c) \e$.
\end{pf}
 
\noindent Now we compute using this claim and the invariance of $\mu$:
\begin{align*}
    \int_Y(f-c) \, d\mu  
    &= 
    \int_X A_{\mathbb{1}_Y(f-c)}[T\cdot x]\,d\mu(x)
    \\ 
    &= 
    \int_Z A_{\mathbb{1}_Y(f-c)}[T\cdot x]\,d\mu(x) 
    + 
    \int_{X \setminus Z} A_{\mathbb{1}_Y(f-c)}[T\cdot x] \,d\mu(x) 
    \\
    &\ge 
    -(M+c) \e - (M+c) \e = -2 (M+c) \e = - \frac{c}{3}.
\end{align*}
This gives a contradiction:
\begin{align*}
    0 = \int_X f\, d\mu 
    &= 
    c +\int_X (f-c) \, d\mu
    \\
    &= 
    c + \int_Y(f-c) \, d\mu + \int_{X \setminus Y} (f-c) \, d\mu 
    \\
    &> 
    c - \frac{c}{3} - \frac{c}{3} > 0. \qedhere
\end{align*}
\end{proof}
\section{The tiling property for increasing Tempelman F{\o}lner sequences}\label{differentsizes}

In this section, we prove the following:

\begin{lemma}\label{tempelman}
The tiling property holds for pmp actions of amenable groups along increasing Tempelman F{\o}lner sequences.
\end{lemma}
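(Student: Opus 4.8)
The plan is to prove the tiling property by iterating a Vitali covering lemma at successively finer scales, exactly as indicated in the introduction. Fix a pmp action of an amenable group $\Gamma$, an increasing Tempelman F\o lner sequence $(F_n)$ with Tempelman constant $C$, a pointwise increasing sequence $\ell_n : X \to \N$, and $\epsilon > 0$. The first step is to establish a single-scale covering statement: given a measurable "scale function" $\ell : X \to \N$ bounded below by a large constant $N$, one can find a measurable subset $D \subseteq X$ and a measurable assignment $x \mapsto \gamma_x \in \Gamma$ such that the sets $F_{\ell(x)}\cdot x$ for $x \in D$ are pairwise disjoint (as multisets, tracking multiplicities along orbits) and their union covers at least a fixed fraction $1/C$ of $X$ (in measure, again counting multiplicity). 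This is the orbit-equivalence-relation analogue of the classical Vitali covering lemma: one greedily selects tiles $F_{\ell(x)}\cdot x$ of maximal scale, discards everything that meets a chosen tile, and the Tempelman bound $|\bigcup_{k \le n} F_k^{-1}F_n| \le C|F_n|$ guarantees that each selected tile of scale $n$ "kills" at most $C|F_n|$ worth of potential centers, so the selected tiles must cover mass at least $1/C$. Making this selection measurable and uniform across orbits is a standard marker-lemma / maximal-function argument on the orbit equivalence relation, and I would cite or adapt the measurable Vitali lemma from \cite{OW} or \cite{Lin} for this.

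The second step is the iteration. Choose $K$ large enough that $(1 - 1/C)^K < \epsilon/2$. Starting from the given sequence $\ell_n$, I would first pass to scales that are "zoomed out" far enough: replace each $\ell_i$ by $\ell_i'$ which is $\ell_i$ composed with shifting the index up by some huge $N_0$, or more precisely work with the tail of the sequence so that all tiles used have index at least $N_0$; the Tempelman constant $C$ does not depend on $N_0$, which is the crucial point. Now apply Step 1 on the whole space with the scale function $y \mapsto F_{\ell_1(y)}$ (roughly — one has to be careful which of the countably many functions $\ell_i$ to use, and I would use a diagonal/greedy choice picking for each uncovered point the largest available $F_{\ell_i(y)}$ that still "fits"), obtaining tiles covering a $1/C$-fraction; then apply Step 1 again to the leftover set (which has measure $\le 1 - 1/C$), using the \emph{next} available scales $\ell_i$ on that leftover set; repeat $K$ times. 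After $K$ rounds the uncovered part has measure at most $(1-1/C)^K < \epsilon/2$. The union $T$ of all the finitely many group elements appearing in all the selected tiles, across all orbits, is the desired finite set — or rather, one extracts from the covering a single finite $T \subseteq \Gamma$ working for a $(1-\epsilon)$-fraction of starting points $x$ by a Fubini/pigeonhole argument over $X$, exactly as in the proof of \cref{lemma_tserunyan}. Finiteness of $T$ and its being "arbitrarily large" both follow from choosing $N_0$ large, since $|F_{N_0}| \to \infty$.

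The main obstacle, and the place where real care is needed, is the bookkeeping in the iteration: one must ensure that tiles selected in later rounds are disjoint from tiles selected in earlier rounds (not merely disjoint among themselves), and that the scale functions $\ell_i$ used remain a legitimate pointwise-increasing family throughout — since the $\ell_i$ are only defined as the enumeration of indices with a certain property, when we move to a leftover set and need a "fresh" scale we must use $\ell_{i+1}, \ell_{i+2}, \dots$ and show there are still arbitrarily large such indices available at almost every point. Concretely, at a point $y$ that survives the first $j$ rounds, the tiles already placed near $y$ occupy a bounded portion of $y$'s orbit, and since $(F_n)$ is F\o lner the set $F_{\ell_i(y)}\cdot y$ for large $i$ is almost entirely disjoint from that bounded portion (the boundary-to-volume ratio tends to $0$); this is what lets the next round proceed with only a negligible further loss, absorbed into the $\epsilon/2$ budget. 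Handling the non-free case requires consistently treating $T\cdot x$ and each $F_{\ell_i(y)}\cdot y$ as multisets and checking that the Vitali selection and the disjointness are compatible with multiplicities, which I would do by passing to the (possibly non-free) orbit equivalence relation and working with the counting measure on orbits throughout.
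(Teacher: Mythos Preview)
Your high-level picture---iterate a Vitali covering at multiple scales until the uncovered fraction drops below $\epsilon$---matches the paper, but the execution has two genuine gaps that would cause the argument to fail as written.

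\textbf{First, the order of quantifiers on $T$ is inverted.} The tiling property asks you to produce a \emph{fixed} finite $T\subseteq\Gamma$ such that, for most $x$, the set $T\cdot x$ is tiled. You instead build a global (measurable) tiling of most of $X$ by sets $F_{\ell_i(y)}\cdot y$ and then hope to ``extract $T$ by Fubini/pigeonhole.'' This does not work: your global Vitali selection chooses, at each $y$, the \emph{largest} available $F_{\ell_i(y)}$, and since the functions $\ell_i$ are unbounded over $X$, the tiles you place have unbounded size. There is then no finite $T$ such that, for a positive-measure set of $x$, the tiles covering $T\cdot x$ are centered in $T\cdot x$ and contained in it. The paper avoids this by first fixing $r$ well-separated \emph{scale windows} $[L_0,R_0],\dots,[L_{r-1},R_{r-1}]$, showing (using $\bigcup_n\{x:\exists i,\ \ell_i(x)\in[p,n]\}=X$) that almost every point has an available $\ell_i$ in \emph{each} window, then choosing $T$ to be a single F{\o}lner set much larger than $F_{R_{r-1}}$, and only \emph{then} running the finite Vitali argument inside each $T\cdot x$. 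Bounding the tile sizes per round is not a technicality---it is what makes $T$ exist.

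\textbf{Second, your disjointness argument for later rounds goes in the wrong direction.} You write that a surviving point $y$ can use $F_{\ell_i(y)}\cdot y$ for large $i$, which is ``almost entirely disjoint'' from the bounded set of previously placed tiles because $(F_n)$ is F{\o}lner. But a large F{\o}lner set \emph{centered at $y$} need not be mostly disjoint from a given bounded set---it contains $y$ and may engulf the earlier tiles entirely. The paper runs the iteration \emph{from large to small}: round $k$ uses tiles with indices in $[L_{r-k},R_{r-k}]$, strictly smaller than all earlier rounds. The new small tiles are placed only at centers $y$ for which $F_{R_{r-k}}\cdot y$ lies entirely in the uncovered region; the loss is the strip along the boundary of each earlier $K_j$, and this is small precisely because $L_{r-j}$ was chosen so that $|\partial_{F_{R_{r-j-1}}}F_n|/|F_n|<\eta/|F_{R_{r-j-1}}|$ for $n\ge L_{r-j}$. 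That scale-separation inequality is the engine of the whole proof, and it does not appear in your sketch.
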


As a corollary, by \cref{lemma_tserunyan}, we obtain \cref{ptwise_ergodic}. In order to prove this lemma, we need a Vitali covering lemma. For the rest of this section, fix an amenable group $\Gamma$ and Tempelman F\o lner sequence $F_i$ with Tempelman constant $C$, standard probability space $(X,\mu)$ on which $\Gamma$ acts in a pmp way, and $\epsilon>0$.

\begin{lemma}
[Vitali covering]\label{Vitali_covering} Given a function $l:X\rightarrow N$ and a finite subset $S\subseteq X$, there exists a set $K$, which is a disjoint union of sets of the form $F_{l(x)}x$, $x\in S$, such that $|K|\geq\frac{1}{C}|S\cup K|$.
\end{lemma}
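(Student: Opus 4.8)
The plan is to build $K$ greedily, processing candidate tiles $F_{l(x)}x$ in order of decreasing size (i.e., decreasing $l(x)$ among those $x \in S$ that are actually relevant), and adding a tile to $K$ whenever it is disjoint from everything added so far. Concretely, I would enumerate the distinct values of $l$ on $S$ in decreasing order, $n_1 > n_2 > \cdots$, and at stage $j$ process the points $x \in S$ with $l(x) = n_j$ one at a time, adding $F_{n_j}x$ to $K$ if and only if it is disjoint from the union of all tiles selected so far. Let $K$ be the union of all selected tiles; it is by construction a disjoint union of sets of the form $F_{l(x)}x$.

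\textbf{The counting estimate.} It remains to show $|S \cup K| \le C|K|$. The key claim is that every point of $S \cup K$ lies in $\bigcup_{y \in \text{selected}} (\bigcup_{k \le l(y)} F_k^{-1} F_{l(y)})\, y$, i.e., in the ``Tempelman hull'' of some selected tile. Indeed, take $z \in S \cup K$. If $z \in K$, then $z \in F_{l(y)} y$ for some selected $y$, and since $1 = e \in F_k$ for... — more carefully, $z \in F_{l(y)}y$ gives $z \in F_0^{-1}F_{l(y)} y$ or at worst $z$ is in the hull directly because $F_{l(y)} \subseteq \bigcup_{k \le l(y)} F_k^{-1}F_{l(y)}$ (taking $k$ with $e \in F_k$, or absorbing into the $k = l(y)$ term if $e \in F_{l(y)}^{-1}F_{l(y)}$, which always holds). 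If $z \in S$, consider its own candidate tile $F_{l(z)}z$: it was \emph{not} added only if it met some previously selected tile $F_{l(y)}y$ with $l(y) \ge l(z)$ (this ordering is exactly why we process large tiles first). Then there are $a \in F_{l(z)}$, $b \in F_{l(y)}$ with $a z = b y$, so $z = a^{-1} b y \in F_{l(z)}^{-1} F_{l(y)} y \subseteq \bigcup_{k \le l(y)} F_k^{-1} F_{l(y)}\, y$ since $l(z) \le l(y)$. If $F_{l(z)}z$ \emph{was} added then $z$ is handled by the $K$ case. Either way $z$ lies in the Tempelman hull of a selected tile.

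\textbf{Finishing.} By the Tempelman condition, the hull of each selected tile $F_{l(y)}y$ has size at most $C|F_{l(y)}y| = C|F_{l(y)}|$ (here I use that the action is pmp, hence the multiset sizes $|F_n y|$ equal $|F_n|$ — or, if one wants to be careful about non-free actions, one works with multisets throughout and the Tempelman bound still applies orbit-wise). Since the selected tiles are pairwise disjoint, $|K| = \sum_{y \text{ selected}} |F_{l(y)}|$, and therefore
$$|S \cup K| \;\le\; \sum_{y \text{ selected}} \Big| \Big(\bigcup_{k \le l(y)} F_k^{-1} F_{l(y)}\Big) y \Big| \;\le\; C \sum_{y \text{ selected}} |F_{l(y)}| \;=\; C|K|,$$
which rearranges to $|K| \ge \frac{1}{C}|S \cup K|$, as desired.

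\textbf{Main obstacle.} The subtle point is the bookkeeping in the non-free case, where $S$, $K$, and the tiles $F_{l(x)}x$ must be treated as multisets, and ``disjoint'' means disjoint as multisets in the orbit; one must check that the greedy selection and the hull-covering argument go through verbatim with multiplicities, which they do because the Tempelman inequality is a statement about the group $\Gamma$ itself and transfers to each orbit. The other point requiring care is ensuring the processing order genuinely guarantees that a rejected tile $F_{l(z)}z$ conflicts with a tile at least as large — this is immediate from processing in decreasing order of $l$, but it is the crux of why the Tempelman (rather than merely tempered) hypothesis, with the union over $k \le n$, is what is needed.
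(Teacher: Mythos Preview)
Your proof is correct and follows essentially the same greedy Vitali argument as the paper: process $S$ in decreasing order of $l$, select tiles compatible with those already chosen, and bound $|S\cup K|$ via the Tempelman hulls of the selected centers. The only cosmetic differences are that the paper's selection criterion is ``center not yet in the deletion set $D=\bigcup F_t^{-1}F_t\,x_j$'' rather than your ``tile disjoint from $K$'', and the paper uses only the single term $F_t^{-1}F_t$ (together with the increasing hypothesis) for the hull in place of your full union $\bigcup_{k\le l(y)} F_k^{-1}F_{l(y)}$.
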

\begin{proof}
Put $=S=\{x_1,...,x_n\}$, $D_0=K_0=\emptyset$. We will inductively define increasing sets $K_i$ and $D_i$ for $i\leq n$ until $S\setminus D_i=\emptyset$. Assume $S\setminus D_i\neq \emptyset$. Let $t=\max_{x\in S\setminus {D_i}}l(x)$, and let $j$ be least such that $x_j\in S\setminus D_i$ and $l(x_j)=t$. Put $K_{i+1} \defeq K_i\cup F_t x_j$ and $D_{i+1} \defeq D_i\cup F_t^{-1}F_T\cdot x_j$. Iterate this (up to $n$ times) until $S\setminus D_m=\emptyset$ for some $m\leq n$. Put $K\defeq K_m$ and $D\defeq D_m$. 

We claim that the selected $F_T\cdot x_j$ are actually pairwise disjoint. If not, suppose that at the $(i+1)^{\text{th}}$ step there  is some $y\in F_T\cdot x_j\cap K_i$. Then $x_j\in F_t^{-1}y$. But since $y\in K_i$, there is some $t^\prime\geq t$ and $j^\prime$ such that $y\in F_{t^\prime}x_{j^\prime}$. Hence $x_j\in F_t^{-1}F_{t^\prime}x_{j^\prime}\subseteq F_{t^\prime}^{-1}F_{t^\prime}x_{j^\prime}\subseteq D_i$, contradicting our choice of $x_j$. 

So at each step, we add exactly $|F_t|$ elements to $K$ and at most $C|F_t|$ elements to $D$. Hence, $|K|\geq \frac{1}{C}|D|\geq \frac{1}{C}|S\cup K|$ since $S\cup K\subseteq D$. 
\end{proof}

Now we may begin proving \cref{tempelman}. The idea is to break our space into large finite sets. We will tile each of these finite sets with F{\o}lner shapes of various sizes, using progressively smaller F{\o}lner shapes to fill in whatever holes remain after placing the larger F{\o}lner shapes.


\begin{proof}[Proof of \emph{\cref{tempelman}}]
First, fix $r\in\N$ large enough so that $\left(\frac{C-1}{C}\right)^r<\frac{\epsilon}{2}$. We will ultimately pick $r$ many ``good" sizes of tiles for a large fraction of the points in $X$. Fix $r$ many functions $G_i:[0,1]\rightarrow\mathbb{R}$ such that $G_1(x)\geq\frac{2}{C} (x)$ and for $i>1$, $G_i(x)\geq \frac{C-1}{C}G_{i-1}(x)+(i+1)(x)$ where  each $G_i$ is continuous and $G_i(0)=0$. For example, $G_i(x)\defeq\left(\frac{C-1}{C}\right)^ix+\sum_{k=1}^{i+1}kx$ is such a collection of functions. Fix $\alpha$ small enough so that $\beta\leq \alpha\implies G_r(\beta)<\frac{\epsilon}{2}$. Put $\eta\defeq\min(\alpha,\epsilon)$.

For each $p\in\N$, let $C_n^{(p)}\defeq\{x\in X:(\exists i\in\N)\;l_i(x)\in[p,n]\}$. Since the $l_i$ are strictly increasing in $x$ for all $x$, $\bigcup_n C_n^{(p)}=X$. Hence, there is some large enough $p^*$ such that $\mu(C_{p^*}^{(p)})>1-\frac{\eta^2}{r}$. This means that for any $r$-many values $p_j$ ($0\leq j<r$),  $$\mu(\{x\in X:(\forall j< r)\;(\exists i\in\N)\;l_i(x)\in[p_j,p_j^*]\})>1-\eta^2.$$

We define two sequences of natural numbers of length $r$ as follows. Let $L_0$ be large enough so that $\frac{|\partial_{F_{r}} F_n|}{|F_n|}<\eta$ for all $n>L_0$. For $i<r$, define $R_i\defeq L_i^*$, and $L_{i+1}>R_i$ large enough so that 
$$\frac{|\partial_{F_{R_{i}}} F_n|}{|F_n|}<\frac{\eta}{|F_{R_i}|}$$
for all $n\geq L_{i+1}$. Put $\delta\defeq\frac{\eta}{|F_{R_{r-1}}|}$. 
We will think of the $[L_j,R_j]$ as ranges of allowable sizes for out tiles. Finally,
Let $T\subseteq\Gamma$ satisfy $\frac{|\partial_{F_{R_{r-1}}} T|}{|T|}<\delta$. 

Define parial functions $p_i(x)\defeq l_j(x)$ where $j$ is smallest such that $l_j(x)\in[L_i,R_i]$ if such a $j$ exists. Set $P \defeq \set{x\in X:\;(\forall i<r)\;x\in\text{dom}(p_i)}$. Hence, $\mu(P) > 1-\eta^2$, so $\mu(\set{x\in X:A_{\mathbb{1}_P}[T\cdot x]<1-\eta}) \leq \eta$, because otherwise, setting $B\defeq \set{x\in X:A_{\mathbb{1}_P}[T\cdot x]<1-\eta}$, 
 \begin{align*}
     1-\eta^2 < \mu(P) 
     &= 
     \int_X\mathbb{1}_P(x) d\mu(x)
     \\
     [\text{by the invariance of }\mu]
     &=
     \int_X\frac{1}{|T|}\sum_{\gamma\in T} \mathbb{1}_P(\gamma\cdot x)d\mu(x)
     \\
     &= 
     \int_X A_{\mathbb{1}_P}[T\cdot x] \; d\mu(x)
     \\
     &=
     \int_{X\setminus B}A_{\mathbb{1}_P}[T\cdot x] \; d\mu(x) + \int_B A_{\mathbb{1}_P}[T\cdot x] \; d\mu(x)
     \\
     &\leq \mu(X\setminus B)+ \mu(B)(1-\eta)
     \\
     &<
     1-\eta + \eta(1-\eta)
     =
     1 - \eta^2
 \end{align*}

Hence, at least $1-\eta$ fraction of points $x\in X$ have $1-\eta$ fraction of points of $T\cdot x$ lying in $P$. Since $\eta\leq \epsilon$, it now suffices to show that for a point $x$ such that at least $1-\eta$ fraction of $T\cdot x$ is contained in $P$, we can  tile $T\cdot x$ up to $\epsilon$ fraction with tiles of the form $F_{l_i(x)}x$. 

We claim that in $k$ steps, $1\leq k\leq r$, we can tile $T\cdot x$ up to $\left(\frac{C-1}{C}\right)^k+G_k(\eta)$ fraction. As discussed earlier, we will start by tiling with our largest F{\o}lner shapes, i.e. $l_i(x)\in [L_r,R_r]$, and in each step we will move down a size.

In step $k$, apply \cref{Vitali_covering} with $l=p_{r-k}$ and 
$$S_k\defeq\{x\in T\cdot x\cap \text{dom}(p_{r-k}):F_{R_{r-k}}x\text{ is contained in the set of uncovered  points in }T\}$$
so that the constructed set of tiles $K_k$ is contained in the set of uncovered  points in $T$. 

See \cref{step2} for a sketch of the tiling process for $\Z^2$. In the first picture, we place the tiles from $K_1$, and in the second picture we remove strips along the boundaries of $T\cdot x$ as well as $K_1$. We also mark the points $y$ and $z$ for which $p_{r-2}$ is not defined. Applying \cref{Vitali_covering} to $S_2$ (the remaining points), we place smaller tiles, seen in a lighter color in the third picture.
\begin{figure}[htp]
    \centering
    \includegraphics[width=5.3cm]{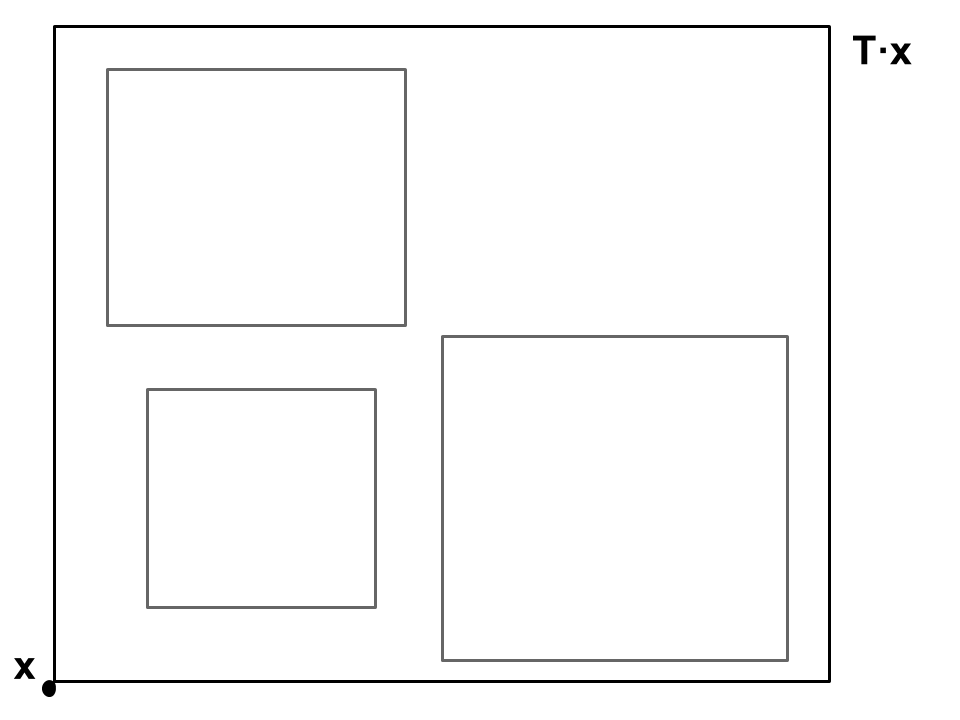}
    \includegraphics[width=5.3cm]{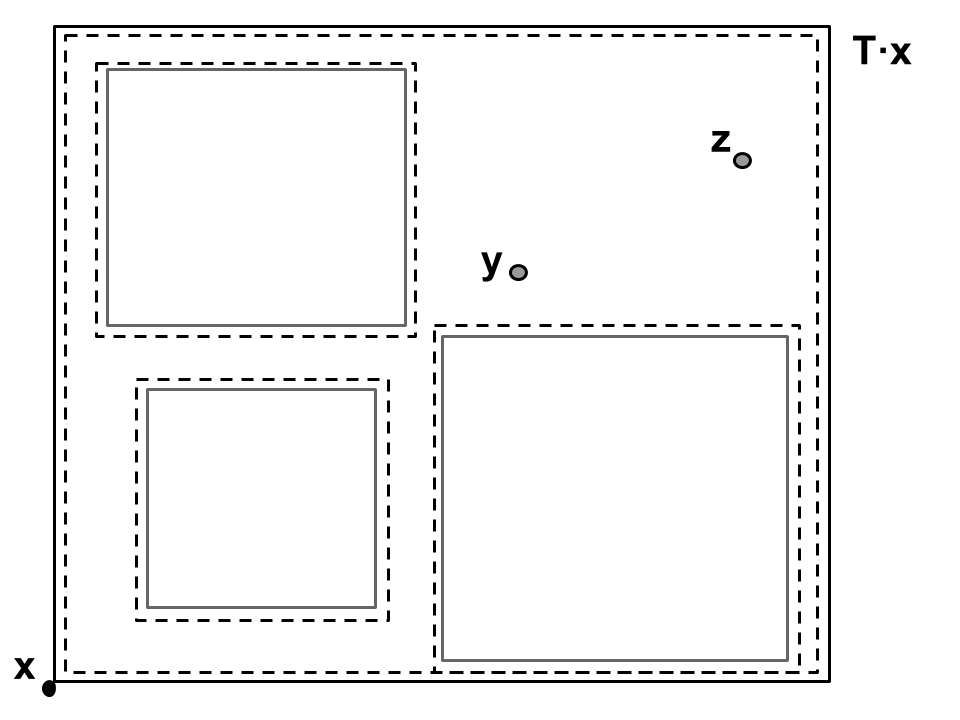}
    \includegraphics[width=5.3cm]{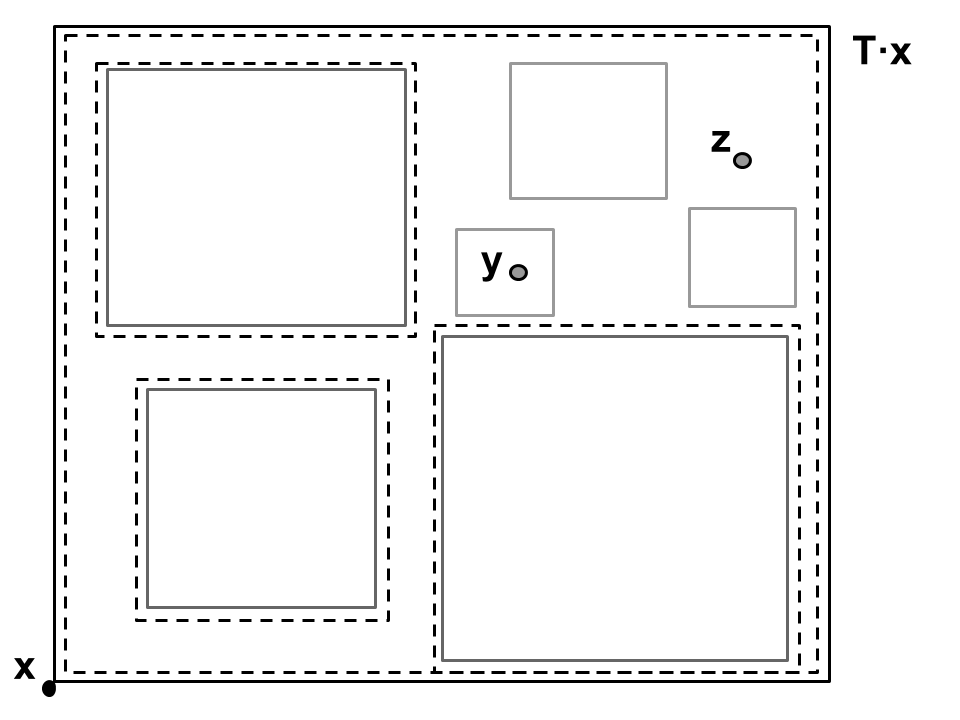}
    \caption{Identifying the set $S_2$ and placing the set of tiles $K_2$ when $\Gamma=\Z^2$}
    \label{step2}
\end{figure}

Now, $S_k$ is almost all of the uncovered points in $T$ except possibly:
\begin{enumerate}
    \item A small strip along the boundary of $T$, of size $|\partial_{F_{R_{r-k}}} T||F_{R_{r-k}}|<\eta|T|$ since $\frac{|\partial_{F_{R_{r-k}}} T|}{|T|}<\frac{\eta}{|F_{R_{r-}k}|}$.
    \item The set of points on which $p_{r-k}$ is not defined, which has fewer than $\eta|T|$ points.
    \item A small strip along the boundary of the covered points from each of the previous $k-1$ steps. Fix $j<k$, and consider the set $K_j$ of covered points from the $j^\text{th}$ step. We might miss a strip of size $|\partial_{F_{R_{r-k}}} K_j||F_{R_{r-k}}|$. Note that since the boundary of $K_j$ consists of F\o lner shapes of sizes in $[L_{r-j},R_{r-j}]$, we have $\frac{|\partial_{F_{R_{r-k}}} K_j|}{|K_j|}\leq \frac{|\partial_{F_{R_{r-j}}} F_{R_{r-j}}|}{|F_{R_{r-j}}|}$, so 
    \begin{align*}
        |\partial_{F_{R_{r-k}}} K_j||F_{R_{r-k}}|&\leq \frac{|\partial_{F_{R_{r-j}}} F_{R_{r-j}}|}{|F_{R_{r-j}}|}|K_j||F_{R_{r-k}}| \\
        &\leq \frac{\eta}{|F_{R_{r-j-1}}|}|K_j||F_{R_{r-k}}|\\
        &\leq \eta|T|,
    \end{align*}
    where the penultimate inequality comes from our choice of $L_{r-j}$ to be large enough that $n\geq L_{r-j}$ implies $\frac{|\partial_{F_{R_{r-j}}} F_n|}{|F_n|}<\frac{\eta}{|F_{R_{r-j-1}}|}$, and the final inequality comes from $K_j\subseteq T$ and the fact that $r-k\leq r-j-1$ for any $j<k$.
\end{enumerate}

In total, $S_k$ is missing at most $(k+1)\eta|T|$ uncovered  points from $T$. If $k=1$, we have that $K_1$ covers at least $\frac{1}{C}$ fraction of $S_1\cup K_1$, and $|S_1\cup K_1|\geq (1-2\eta)|T|$. So $K_1$ covers at least $\frac{1}{C}(1-2\eta)|T|$, and we are left with $\frac{C-1+2\eta}{C}|T|$, so we cover all but $\frac{C-1}{C}+G_1(\eta)$ fraction of $|T|$.

If $k\geq 2$, assume $\bigcup_{i<k}K_i$ covers all but $\left(\frac{C-1}{C}\right)^{k-1}+G_{k-1}(\eta)$ fraction of $T$. Notice that 
$$|S_k\cup K_k|\leq \left(\left(\frac{C-1}{C}\right)^{k-1}+G_{k-1}(\eta)\right)|T|,$$
since both $S_k$ and $K_k$ are contained in the set of uncovered points of $T$. Since $K_k$ covers at least $\frac{1}{C}$ fraction of $|S_k\cup K_k|$, at most $\frac{C-1}{C}$ fraction of $|S_k\cup K_k|$ is left uncovered. So $\bigcup_{i\leq k}K_k$ covers all of $T$ but at most
\begin{align*}
    (k+1)\eta|T|+\frac{C-1}{C}|S_k\cup K_k|&\leq \left((k+1)\eta+\left(\frac{C-1}{C}\right)^{k}+\frac{C-1}{C}G_{k-1}(\eta)\right)|T|\\
    &\leq \left(\left(\frac{C-1}{C}\right)^{k}+G_k(\eta)\right)|T|
\end{align*}
many points. This concludes the proof of our claim. Iterate this algorithm $r$ times so that we have covered all but $\left(\frac{C-1}{C}\right)^r+G_r(\eta)$ fraction of $T$. Since, by hypothesis, both $\left(\frac{C-1}{C}\right)^r, G_r(\eta)<\frac{\epsilon}{2}$, this concludes the proof.
\end{proof}

\begin{bibdiv}
\begin{biblist}

	

\bib{Bir}{article}{author = {Birkhoff, G. D.} 
title = { Proof of the ergodic theorem}
date = {1931}
journal = {Proc. Natl. Acad. Sci.}
volume = {17}
number = {12}
pages = {656--660}
}

\bib{Eme}{article}{author = {Emerson, W. R.}
title = {The pointwise ergodic theorem for amenable groups}
date = {1974}
journal = {American Journal of
Mathematics}
volume = {96}
number = {3}
pages = {472--478}
review = { \MR{50:7403}}
}

\bib{Hoc}{article}{author = {Hochman, M.} 
title = {Averaging sequences and abelian rank in amenable groups}
date = {2007}
journal = {Israel J.
Math.}
volume = {158}
pages = {119-128}
}

\bib{Keane-Petersen:ergodic_thm}{article}{
author={Keane, Michael},
author={Petersen, Karl},
title={Easy and nearly simultaneous proofs of the ergodic theorem and
maximal ergodic theorem},
conference={
title={Dynamics \& stochastics},
},
book={
series={IMS Lecture Notes Monogr. Ser.},
volume={48},
publisher={Inst. Math. Statist., Beachwood, OH},
},
date={2006},
pages={248--251},
review={\MR{2306205}},
doi={10.1214/lnms/1196285825},
}

\bib{Lin}{article}{author = {Lindenstrauss, E.} 
title = {Pointwise theorems for amenable groups}
date = {2001}
journal = {Invent. Math.}
volume = {146}
number = {2}
pages = {259--295}
}

\bib{OW}{article}{
author={Ornstein, D.},
author={Weiss, B.},
title={The Shannon-McMillan-Breiman theorem for a class of amenable
groups},
date={1983},
journal = {Israel Journal of Mathematics}
volume = {44}
number = {1}
pages={53--60},
review={\MR{85f:28018}},
}

\bib{Tem}{article}{
author={Tempelman, A.},
title={Ergodic theorems for general dynamical systems},
date={1967},
journal = {Dokl. Akad. Nauk SSSR}
volume = {176}
number = {4}
pages={790--793},
review={\MR{36:2779}},
journal = {English translation: Soviet Math. Dokl.}
volume = {8}
date = {1967}
number = {5}
pages = {1213--1216}
}

\bib{Tse}{article}{author = {Tserunyan, A.} 
title = { A descriptive set theorist's proof of the pointwise ergodic theorem}
date = {2017}
journal = {Preprint}
pages = {arXiv:1805.07365}}

\end{biblist}
\end{bibdiv}

\end{document}